\newcolumntype{C}[1]{>{\centering\let\newline\\\arraybackslash\hspace{0pt}}m{#1}}
\newcommand{\rmnum}[1]{\romannumeral #1}
\newcommand{\Rmnum}[1]{\expandafter\@slowromancap\romannumeral #1@}
\def\e{\mbox{e}}
\def\E{\mathbb{E}}
\def\P{\mathbb{P}}
\def\R{\mathbb{R}}
\def\In{\infty}
\theoremstyle{definition}
\newtheorem{theorem}{\bf Theorem}[section]
\newtheorem{lemma}[theorem]{Lemma}
\theoremstyle{definition}
\title{Upper bound for the tail functions of the growth rate for supercritical branching processes in random environment}
\date{}
\author{Yinna Ye\footnote {Department of Statistics and Actuarial Science, School of Science, Xi'an Jiaotong-Liverpool University, Suzhou 215123, China. E-mail: yinna.ye@xjtlu.edu.cn}}
\begin{document}
\maketitle

{\bf Abstract.} Suppose that $(Z_n)_{n\geq0}$ is a supercritical branching process in independent and identically distributed random environment. The right tail function of the scaled growth rate for $(Z_n)_{n\geq0}$ is studied. The upper bounds for $\displaystyle\P\left[\frac{\log Z_n}{Mn}-\mu\geq x\right]$ for any $x\geq3$ are obtained, by applying an extension of the Hoeffding type inequalities.\\
\vspace{0.3cm}

\textit{Keywords}: Branching processes; Random environment; Hoeffding's inequality\\
\vspace{0.3cm}
\textit{2000 MSC}: Primary 60J80; 60K37; 60G42

%\linenumbers
%%
%% Start line numbering here if you want
%%
% \linenumbers

%% main text
\section{Introduction and setting}
The branching process in random environment (BPRE) has become a hot topic since 1970's. One of the best-known works since then are those developed by Athreya and Karlin in 1971 (\cite{Ath1}, \cite{Ath2}), based on the Smith and Wilkinson's model (\cite{Smi}) where the random environment process is supposed to be independent and identically distributed (i.i.d.) random variables (r.v.s), they extend their model to more general situations of the random environment like e.g. stationary and metrically transitive random process or Markov chain. They found 'extinction or explosion result' for BPRE and delimited complete criteria for certainty or noncertainty of extinction and clarified the three classes: subcritical, critical and supercritical BPRE for the first time. Then Tanny \cite{Tan} improved the conditions of 'extinction or explosion result' and studied the rate of growth of the BPRE especially when the branching process (BP) is supercritical. Later on, based on these results, a series of papers appeared and studied especially BPs in i.i.d. random environment. Their focus are mainly in two directions: one is on the asymptotic of the survival probability for subcritical BPs, the typical papers are \cite{Koz}, \cite{Gei1}, \cite{Gui} \cite{Gei2} and \cite{Emile}; another one is on the large deviations of supercritical BPs, the typical papers are \cite{Boi}, \cite{Chu}, \cite{Nak} and \cite{Gra}.\\

However, so far the Hoeffding type inequalities for BPRE has not been studied yet in the literature. If $(Y_i)_{1\leq i\leq n}$ is a sequence of centered ($\E(Y_i)=0$) r.v.s with finite variance. The Hoeffding type inequalities provide upper bounds for the right tail function of $\sum_{i=1}^nY_i$. Particularly, if $(Y_i)_{1\leq i\leq n}$ are independent r.v.s satisfying $Y_i\leq 1$, the classical Hoeffding inequality, initially developed by Bennett \cite{Ben} and Hoeffding \cite{Hoef}, is in the following form: for any $t>0$,
 $$\P\left(\sum_{i=1}^nY_i\geq nt\right)\leq\left(\frac{\tau^2}{t+\tau^2}\right)^{n(t+\tau^2)}\e^{nt},$$
 where $\displaystyle\tau^2=\frac{1}{n}\sum_{i=1}^n\E(Y^2_i)$. More recently, Fan et al. \cite{Fan} improved the classical upper bound and extended their results to the case when  $\left(\sum_{i=1}^n Y_i\right)_{n}$
  is a supermatingale with differences bounded above.\\

The aim of the present paper is to study the right tail function of the growth rate in the form $\displaystyle\P\left[\frac{\log Z_n}{Mn}-\mu\geq x\right]$
for supercritical BPs in i.i.d. random environment and give an upper bound. More precisely, we try to see under the application of Hoeffding type inequalities developed by Fan \textit{et al.} \cite{Fan} recently, how large the range of $y\in\R^+$ may attain for validation of the potential convergence in probability of the sample mean $\displaystyle\frac{\log Z_n}{n}$ as follows
$$\P\left(\left|\frac{\log Z_n}{n}-\mu\right|\geq y\right)\stackrel{n\rightarrow+\In}{\longrightarrow}0.$$
The main result in the present paper (cf. Theorem \ref{Thm1}) shows that under certain conditions, when $x\geq3$,
$$\P\left[\frac{\log Z_n}{Mn}-\mu\geq x\right]\stackrel{n\rightarrow+\In}{\longrightarrow}0.$$
To our best knowledge, unfortunately, when $x$ is small (for instance, $0\leq x<3$), Hoeffding type inequalities seem not effective to obtain the above convergence. Nevertheless, the conclusion above is natural and reasonable, since $\log Z_n$ is not a sum of i.i.d random variables anymore in general.\\

The paper is organized as follows. In the rest of this section, the BPs in i.i.d. random environment and the assumptions will be introduced. The mail result Theorem \ref{Thm1} and its proof are given in Section \ref{Sec2}. In Section \ref{Sec3}, an example of the binary branching in i.i.d. random environment will be discussed to show the feasibility. \\

Consider a branching process (BP), denoted by $(Z_n)_{n\geq0}$, evolving in i.i.d. random environment $\overline{\xi}=(\xi_0,\xi_1,\cdots)$ with common distribution $\nu$. The BPs in i.i.d. environment can be defined as follows. For instance, one may also refer to \cite{Chu} and \cite{Gra} for the definition. Let $Z_n$ denote the number of individuals at the $n$th generation in a family tree and $Z_n$ satisfies the following recursive form:
 $$Z_0\equiv1,\quad Z_{n+1}=\sum_{i=1}^{Z_n}N_{n,i},\text{ for }n\geq0,$$
  where $N_{n,i}$ represents the family size of the $i$th father individual located in the $n$th generation. Given the environment $\xi_n$ in the $n$th generation, $(N_{n,i})_{i\geq1}$ is a sequence of i.i.d. r.v.s with  the conditional probability mass function (p.m.f.) $(p_k(\xi_n))_{k\geq0}$. Because the one-to-one correspondence between p.m.f. and probability distribution, sometimes people also call $(p_k(\xi_n))_{k\geq0}$ the { (conditional) offspring distribution} in the $(n+1)$th generation. Suppose that given the { environment sequence} $\overline{\xi}$, every family size $(N_{n,i})_{n\geq0}$ are conditionally independent from generation to generation. Let
$$m_n:=\sum_{k=0}^{+\In}kp_k(\xi_n), \quad\text{for }n\geq0;$$
$$\Pi_n:=m_0\cdots m_{n-1}, \text{ for }n\geq1\text{ and } \Pi_0=1;$$
$$S_n:=\log\Pi_n=\sum_{i=1}^nX_i, \text{ for } n\geq1,$$
where for $i\geq1$, $X_i:=\log m_{i-1}$ is the log-conditional mean of the family size of a father individual located in the $(i-1)$th generation given the environment $\xi_{i-1}$ in that generation. Then we have
\begin{equation}\label{eq1}
\log Z_n=S_n+\log W_n,
\end{equation}
from which it can be seen that $\log Z_n$ is decomposed into two components: $S_n$ and $\log W_n$. It is well known (see also \cite{Chu} and \cite{Gra} for instance) that $(S_n)_{n\geq1}$ is a random walk with i.i.d increments and $(W_n)_{n\geq1}$ is a non-negative martingale under both quenched and annealed laws, denoted by $\P_{\overline{\xi}}$ and $\P$ respectively, with respect to (w.r.t) the filtration $(\mathcal{F}_n)_{n\geq1}$, where $\mathcal{F}_n$ is given by
$$\mathcal{F}_n=\sigma(\overline{\xi},N_{k,i},0\leq k\leq n-1,\, i=1,2,\ldots).$$
Moreover,
$$W=\lim_{n\rightarrow+\In}W_n\quad \text{exists }\P- a.s.$$
and
$$\E(W)\leq1.$$
Let $\mu=\E(X_1)$ and $\sigma^2=\mbox{Var}(X_1)$. Throughout the paper, we will use $\E_{\overline{\xi}}$, $\E$ and $\nu(\cdot)$ to present the expectations w.r.t the probability distributions $\P_{\overline{\xi}}$, $\P$ and $\nu$ respectively. We denote by $C$ an absolute constant whose value may differ from line to line. And we assume the following basic assumptions:
\begin{enumerate}
 \item[1)] $0<\mu<+\In$ and $\E|\log(1-p_0(\xi_0))|<+\In,$\\
 which implies the population size tends to $+\In$ with positive probability.
  \item[2)] $0<\sigma^2<+\In$,\\
  which implies that $\P(Z_1=1)=\E(p_1(\xi_0))<1$.
 \item[3)] $\displaystyle \E\left(\frac{Z_1\log^+{Z_1}}{m_0}\right)<+\In$,\\
 which is a necessary and sufficient condition (by Theorem 2 in \cite{Tan}) to imply that $W_n\rightarrow W$ in $\mathbb{L}^1$, as $n\rightarrow+\In$; and
 $$\P(W>0)=\P(Z_n\rightarrow+\In)=\lim_{n\rightarrow+\In}\P(Z_n>0)>0.$$
\end{enumerate}
Furthermore, assume that
$$p_0=0\quad \P- a.s.,$$
which implies $Z_n\longrightarrow+\In$ and $W>0$ $\P- a.s.$\\

We need the following two more hypothesis:
\begin{description}
 \item[H1)] Suppose $\exists M>0$, s.t. $\forall i\geq1$,
 $$\frac{X_i-\mu}{M}\leq1\quad \P- a.s.$$
in other words, $X_i$ is a bounded random variable almost surely.
 \item[H2)] There exist $p>1$ and $q>2$, s.t. $\displaystyle \E\left(\frac{Z_1}{m_0}\right)^p<+\In$ and $\E|\log m_0|^q<+\In$.
\end{description}

%For BPs in i.i.d random environment, it is easy to arrange $\left(p_k(\xi_0)\right)_k$ to satisfy the assumption 1), as long as the conditional offspring distribution is not concentrated on $k=0,1$ and its conditional variance exists, for almost all environments $\xi_0$'s in the 0th (initial) generation. In other words, if the conditions below hold, then the assumptions 1) and 2) will be both satisfied.
%$$\qquad \sum_{k\geq1}k^2p_k(\xi_0)<+\In,\quad \exists k\geq2,\text{ s.t. }p_k(\xi_0)>0 \text{ and }m_0>1,\; \nu-a.s. \leqno{\text{(C1)}}$$
%Under (C1), a sufficient condition for the assumption 3) is the following
%$$
%\sum_{k\geq2}(k \log k)\,\nu[p_k(\xi_0)]<+\In, \leqno{\text{(C2)}}
%$$
%because $m_0>1,\, \nu-a.s.$ and
%$$\E\left[\frac{Z_1\log^+ Z_1}{m_0}\right]=\sum_{k\geq2}(k\log k)\,\nu\left[\frac{p_k(\xi_0)}{m_0}\right]\leq\sum_{k\geq2}(k\log k)\nu[p_k(\xi_0)].$$
%The assumption H1) can be attained if and only if for any $i\geq1$,
%$$\sum_{k=0}^{+\In}kp_k(\xi_{i-1})\leq C,\quad\nu-a.s.$$
%From above, if (C1) and (C2) hold, then all the assumptions 1) - 3), H1) and H2) will be satisfied. One may find further discussion in Section \ref{secE} for two particular examples.\\
%
%Let
%$$ V_m:=\frac{\log W_m}{\sqrt{n}M}, \text{ for } 1\leq m\leq n.$$
Throughout the paper, we denote by $C$ an absolute constant whose value may differ from line to line. We denote by $\mathds{1}$ the indicator function and $\overset{\mathcal{D}}{=}$ means the equality holds in distribution.\\

 Applying the Hoeffding type inequality in Theorem 2.1 Fan \emph{et al.} \cite{Fan} to $\displaystyle \frac{S_n-n\mu}{M}$, we have
\begin{theorem}[\cite{Fan}]\label{Fan}Under the assumption H1), for any $x\geq0$ and $n\geq1$,
$$\P\left(\frac{S_n-n\mu}{M}\geq x\right)\leq H_n(x,v_n),$$
where $v_n:=\sqrt{\sum_{i=1}^n\E\left(\frac{X_i-\mu}{M}\right)^2}=\frac{\sqrt{n}\sigma}{M}$ and $H_n$ is a function defined by
$$H_n(x,v):=\left[\left(\frac{v^2}{x+v^2}\right)^{x+v^2}\left(\frac{n}{n-x}\right)^{n-x}\right]^{\frac{n}{n+v^2}}\mathds{1}\{x\leq n\},$$
for any $x\geq0$ and $v>0$, with the convention $H_n(n,v)=1$.\\
Furthermore,
$$H_n(x,v)\leq \e^x \left(\frac{v^2}{x+v^2}\right)^{x+v^2}.$$
\end{theorem}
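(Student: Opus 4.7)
The plan is to apply Theorem 2.1 of Fan et al.\ \cite{Fan} directly to the normalized sequence $Y_i := (X_i-\mu)/M$, specialize the variance parameter $v_n^2$ using the i.i.d.\ structure, and then derive the auxiliary envelope by an elementary analytic inequality.

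The first step is to verify that $(Y_i)_{i\geq 1}$ fits the hypotheses of Fan et al.\ Theorem 2.1. Since the environment $\overline{\xi}=(\xi_0,\xi_1,\ldots)$ is i.i.d.\ and $X_i = \log m_{i-1}$ is a deterministic function of $\xi_{i-1}$ alone, the variables $X_i$, hence the $Y_i$, are i.i.d.\ with $\E(Y_i)=0$; by hypothesis H1 they also satisfy $Y_i\leq 1$ almost surely. In particular, $\sum_{i=1}^n Y_i = (S_n-n\mu)/M$ is a martingale with differences bounded above by $1$, which is precisely the setting covered by Fan et al.

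The second step is to unwind the variance parameter. By the i.i.d.\ property, $\E(Y_i^2)=\sigma^2/M^2$ for every $i$, so
\[
v_n^2 \;=\; \sum_{i=1}^n \E(Y_i^2) \;=\; \frac{n\sigma^2}{M^2}, \qquad v_n \;=\; \frac{\sqrt{n}\,\sigma}{M}.
\]
Plugging this value into the statement of Theorem 2.1 of \cite{Fan} yields the first displayed inequality verbatim. So the probabilistic content of the theorem reduces to a direct citation.

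For the auxiliary bound $H_n(x,v)\leq \e^x (v^2/(x+v^2))^{x+v^2}$, I would take logarithms and use the elementary inequality $\log(1+t)\leq t$ in the form $(n-x)\log(n/(n-x))\leq x$ for $0\leq x<n$, together with the complementary estimate $(x+v^2)\log(1+x/v^2)\geq x$ coming from the same one-sided logarithm bound. Substituting these into the definition of $H_n$ and simplifying the outer exponent $n/(n+v^2)$ recovers the classical Bennett--Hoeffding envelope $\e^x(v^2/(x+v^2))^{x+v^2}$. The main (and in fact the only) obstacle is the algebraic bookkeeping needed to reconcile the outer exponent $n/(n+v^2)\leq 1$ with the desired product form; this is handled by combining both log-inequalities, rather than by naive substitution of $(n/(n-x))^{n-x}\leq \e^x$ alone, since the latter does not directly survive being raised to the power $n/(n+v^2)$. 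Everything else is just unwinding definitions.
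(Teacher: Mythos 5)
The paper offers no proof of this statement---it is quoted verbatim as Theorem 2.1 of \cite{Fan}---so the only substantive content to check in your proposal is (a) the specialization to the i.i.d.\ setting and (b) your sketch of the envelope bound. Part (a) is correct and is exactly what the paper does: under H1) the variables $(X_i-\mu)/M$ are i.i.d., centered, bounded above by $1$, and $v_n^2=n\sigma^2/M^2$. Part (b), however, has a genuine gap. Write $\alpha=\frac{n}{n+v^2}$, $g=(n-x)\log\frac{n}{n-x}$ and $h=(x+v^2)\log\frac{x+v^2}{v^2}$, so that $H_n(x,v)=\e^{\alpha(g-h)}$ and the target is $\e^{x-h}$; the claim is therefore equivalent to $\alpha g+(1-\alpha)h\leq x$. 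Your two ingredients are $g\leq x$ and $h\geq x$. The second one is useless here: $h$ enters with the \emph{positive} weight $1-\alpha$, so a lower bound on $h$ points the wrong way, and the elementary upper bound $h\leq x+x^2/v^2$ from $\log(1+t)\leq t$ only gives $\alpha g+(1-\alpha)h\leq x+\frac{x^2}{n+v^2}$, which overshoots. Seen multiplicatively: $g\le x$ and $h\ge x$ give $\e^{g-h}\leq \e^{x-h}\leq 1$, but raising a number in $(0,1]$ to the power $\alpha\leq 1$ makes it \emph{larger}, so $H_n=(\e^{g-h})^{\alpha}\geq \e^{g-h}$ and no combination of these two one-sided bounds closes the argument. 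You correctly sensed the obstacle, but the proposed resolution (``combining both log-inequalities'') is not a proof.

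A derivation that does work: set $F(x)=\alpha g(x)+(1-\alpha)h(x)$ on $[0,n)$. Then $F(0)=0$ and
\begin{equation*}
F'(x)=1+\alpha\log\frac{n-x}{n}+(1-\alpha)\log\frac{x+v^2}{v^2}
\leq 1+\log\left(\alpha\cdot\frac{n-x}{n}+(1-\alpha)\cdot\frac{x+v^2}{v^2}\right)=1+\log 1=1,
\end{equation*}
by concavity of the logarithm, the key point being that the weighted average inside the logarithm equals $\frac{(n-x)+(x+v^2)}{n+v^2}=1$ exactly. Integrating gives $F(x)\leq x$, which is the envelope bound. (At the endpoint $x=n$ one must use the convention of \cite{Fan} that the factor $(n/(n-x))^{n-x}$ equals $1$, i.e.\ $H_n(n,v)=(v^2/(n+v^2))^n$; with the convention $H_n(n,v)=1$ as literally written in the paper, the envelope inequality actually fails, e.g.\ for $n=v^2=1$.) Alternatively, simply cite the remark in \cite{Fan} where $H_n$ is compared with the Bennett bound, which is what the paper implicitly does.
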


%\begin{corollary} \label{Prop1}
%\textcolor{red}{
%Under the assumption H1), for any $x\geq0$,
%$$\P(\widetilde{X}_n\geq x)\leq H_n\left(\sqrt{n}x, \frac{\sqrt{n}\sigma}{M}\right).$$
%}
%\end{corollary}
%\textcolor{red}{
%And we have the following elementary lemma about the function $H_n$ which is useful to prove the main result Theorem \ref{Thm1}.
%}
\begin{lemma}\label{Lem1}
%\textcolor{red}{
For any $x\geq 0$ and $v>0$, the function $H_n(x,v)$ is non-increasing in $x$.
%}
\end{lemma}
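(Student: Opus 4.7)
The plan is to reduce everything to a one-variable calculus check on $\log H_n(x,v)$ with $v$ fixed. For $x\in[0,n)$ the indicator is active and the formula is smooth, so I would take logarithms and work with
$$\varphi(x):=\log H_n(x,v)=\frac{n}{n+v^2}\Bigl[(x+v^2)\log\frac{v^2}{x+v^2}+(n-x)\log\frac{n}{n-x}\Bigr].$$
Differentiating each of the two summands in the bracket, the $(x+v^2)$ term contributes $\log\frac{v^2}{x+v^2}-1$ and the $(n-x)$ term contributes $-\log\frac{n}{n-x}+1$; the constants $\pm 1$ cancel, leaving
$$\varphi'(x)=\frac{n}{n+v^2}\,\log\frac{v^2(n-x)}{n(x+v^2)}.$$

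The sign of $\varphi'(x)$ is then governed by whether $v^2(n-x)\le n(x+v^2)$. Expanding this inequality collapses to $x(n+v^2)\ge 0$, which holds automatically for $x\ge 0$. Hence $\varphi'(x)\le 0$ on $[0,n)$, so $H_n(\cdot,v)=\exp\varphi(\cdot)$ is non-increasing on $[0,n)$.

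The only thing left is to handle the boundary and the region outside $[0,n]$. For $x>n$ the indicator kills $H_n$ (i.e.\ it equals $0$), and for $x=n$ the convention $H_n(n,v)=1$ dominates the values on $(n,+\infty)$, so non-increasingness is preserved across $x=n$. It suffices, then, to note that $\varphi(x)$ is continuous on $[0,n)$ with $\varphi(0)=0$ (both bracketed logs vanish at $x=0$), confirming that the maximum of $H_n(\cdot,v)$ on $[0,n)$ is attained at $x=0$ and equals $1$.

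The proof is essentially a routine derivative computation; the only mildly delicate point is making sure the $\pm 1$ pieces really cancel and that the degenerate endpoints (where one gets expressions like $0\log(\cdot)$ or $(n/0)^0$) are interpreted consistently with the stated convention. No deeper obstacle arises.
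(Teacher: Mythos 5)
Your proof is correct and follows essentially the same route as the paper: take logarithms, differentiate, and observe that $\varphi'(x)=\frac{n}{n+v^2}\log\frac{v^2(n-x)}{n(x+v^2)}\le 0$ because the argument of the logarithm is at most $1$ for all $x\ge 0$ (the paper bounds the ratio by $\frac{nv^2}{nv^2+nx}$, you expand the cross-multiplied inequality to $x(n+v^2)\ge0$; these are the same computation). The one caveat, which the paper's own proof shares, is the endpoint $x=n$: taken literally the convention $H_n(n,v)=1$ would create an upward jump there since $H_n(x,v)\to\bigl(\frac{v^2}{n+v^2}\bigr)^n<1$ as $x\to n^-$; the intended convention (as in Fan et al.) is $\bigl(\frac{n}{n-x}\bigr)^{n-x}=1$ at $x=n$, under which $H_n(\cdot,v)$ is continuous on $[0,n]$ and the monotonicity holds as claimed.
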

\begin{proof}
%\textcolor{red}{
By the definition of $H_n$, when $0\leq x\leq n$,
$$\log H_n(x,v)=\frac{n}{n+v^2}\left\{(x+v^2)[2\log v-\log(x+v^2)]+(n-x)[\log n-\log(n-x)]\right\}.$$
 Taking partial derivative in both sides w.r.t $x$, we can obtain
 \begin{equation}\label{eqthree}
 \frac{\partial}{\partial x}H_n(x,v)=\frac{n}{n+v^2}\;H_n(x,v)\log\left[\frac{v^2(n-x)}{n(v^2+x)}\right]\leq 0,
 \end{equation}
 since for any $0\leq x\leq n$ and $v>0$, $H_n(x,v)>0$ and
 $$0<\frac{v^2(n-x)}{n(v^2+x)}\leq\frac{nv^2}{nv^2+nx}\leq1.$$
 And from the inequality above, the equality in (\ref{eqthree}) is attained only when $x=0$. In the case when $x>n$, we have $H_n(x,v)\equiv0$. We thus prove the desired result.
% }
\end{proof}
\section{Main result and proof}\label{Sec2}
\begin{theorem}\label{Thm1}
Under the assumptions H1) and H2), there exist constants $C>0$ and $\delta\in(0,1)$ s.t. for any $x\geq3$ and $1\leq m\leq n$,
\begin{equation}\label{eqthm1}
%\begin{split}
\P\left(\frac{\log Z_n-n\mu}{nM}\geq x\right)\leq \frac{C\delta^m}{M\sqrt{n}}.
%\end{split}
\end{equation}
%In particular, if $m=n^p$ with $p\in(0,1)$, then
%\begin{equation}\label{part}
%\P\left(\frac{\log Z_n-n\mu}{\sqrt{n}M}\geq x\right)\leq H_n\left(\sqrt{n}x,\frac{\sqrt{n}\sigma}{M}\right)+4H_n\left(\sqrt{n-n^p}\,(x-x_1),\frac{\sqrt{n-n^p}\,\sigma}{M}\right)+C\delta^{n^p}.
%\end{equation}
\end{theorem}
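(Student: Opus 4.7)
The plan is to start from the basic decomposition (\ref{eq1}), $\log Z_n = S_n + \log W_n$, and apply a union-bound split at some threshold $y\in(0,1)$:
\[
\P\!\left(\log Z_n - n\mu \geq nMx\right) \leq \P\!\left(S_n - n\mu \geq nMy\right) + \P\!\left(\log W_n \geq nM(x-y)\right).
\]
Both summands will decay exponentially in $n$, and the announced form $C\delta^m/(M\sqrt{n})$ then follows by a routine rebasing of the geometric rate. Note that the split is meaningful only for $y\in(0,1)$ because H1) forces $S_n-n\mu\leq nM$ deterministically, so $y<1$ is needed to produce a nontrivial Hoeffding bound, while $y<x-1$ (automatic for $x\geq 3$) ensures the $\log W_n$-term is exponentially small.

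For the first summand, Theorem \ref{Fan} with $v_n^2=n\sigma^2/M^2=n\beta$ gives $\P(S_n-n\mu\geq nMy)\leq H_n(ny,v_n)$. Substituting into the explicit formula and simplifying yields
\[
H_n(ny,v_n) = \left[\left(\frac{\beta}{y+\beta}\right)^{y+\beta}\left(\frac{1}{1-y}\right)^{1-y}\right]^{n/(1+\beta)}.
\]
A Taylor expansion at $y=0$ shows the bracket equals $1-y^2(1+\beta)/(2\beta)+O(y^3)$, which is strictly less than $1$ for any sufficiently small fixed $y>0$ and any $\beta>0$; Lemma~\ref{Lem1} (monotonicity of $H_n$ in its first argument) gives flexibility in this choice. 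Hence $H_n(ny,v_n)\leq \delta_1^n$ for some $\delta_1=\delta_1(\beta,y)\in(0,1)$.

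For the second summand, Markov's inequality at the exponent $p>1$ supplied by H2) gives $\P(\log W_n\geq nM(x-y))\leq \e^{-pnM(x-y)}\,\E W_n^{\,p}$. The hypothesis $\E(Z_1/m_0)^p<\infty$, combined with the multiplicative martingale relation $W_{n+1}=W_n Y_n$ where $Y_n=Z_{n+1}/(Z_n m_n)$ is conditionally centered around $1$, produces the uniform bound $\sup_n\E W_n^{\,p}\leq C_p<\infty$ by a standard Burkholder/Bingham--Doney iteration adapted to the BPRE setting. Since $x\geq 3$ and $y\in(0,1)$ imply $x-y\geq 2$, this summand is at most $C_p\,\delta_2^n$ with $\delta_2:=\e^{-2pM}\in(0,1)$.

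Combining yields $\P(\log Z_n - n\mu \geq nMx)\leq C(\max(\delta_1,\delta_2))^n$. Picking any $\delta\in(\max(\delta_1,\delta_2),1)$ allows one to absorb the factor $M\sqrt{n}$, since $\sup_{n\geq 1}M\sqrt{n}\,(\max(\delta_1,\delta_2)/\delta)^n<\infty$, so that $\P\leq C'\delta^n/(M\sqrt{n})$; the trivial estimate $\delta^n\leq \delta^m$ for $1\leq m\leq n$ then yields the claim. The main obstacle is establishing $\sup_n\E W_n^{\,p}<\infty$ from H2) alone, which is not contained in the recalled results and requires exploiting the branching structure of $(W_n)$ rather than just a moment bound on $W_1$; after that, tuning $y$ compatibly with $\beta$ and bookkeeping the constants into a single $(C,\delta)$ is direct.
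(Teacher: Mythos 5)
Your decomposition and union-bound split are sound, and the Hoeffding computation for the first summand is correct (the bracket is indeed $<1$ for every fixed $y\in(0,1)$, not only small $y$, since its logarithm has derivative $\log\bigl[\beta(1-y)/(y+\beta)\bigr]<0$ on $(0,1)$; alternatively you could take $y\in(1,x-1)$, say $y=2$, and get $\P(S_n-n\mu\geq nMy)=0$ outright, because H1) forces $S_n-n\mu\leq nM$ almost surely). The one genuine gap is exactly the step you flag: $\sup_n\E W_n^{\,p}<\infty$ does \emph{not} follow from H2) alone. In the BPRE setting the Bingham--Doney type criterion for $1<p\leq2$ requires, besides $\E(Z_1/m_0)^p<\infty$, the extra condition $\E\,m_0^{1-p}<1$ (see Huang and Liu \cite{Chu}); your Burkholder iteration produces increments of order $(\E\,m_0^{1-p})^n$, which need not be summable in general. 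Here that condition is rescued only by the standing assumption $p_0=0$ $\P$-a.s.\ (hence $m_0\geq1$) together with $\sigma^2>0$, not by H2), so as written the appeal to a ``standard iteration'' is an unverified citation. Fortunately the whole issue is avoidable: $(W_n)$ is a nonnegative martingale with $\E W_n=1$, so first-moment Markov already gives $\P\bigl(\log W_n\geq nM(x-y)\bigr)\leq \e^{-nM(x-y)}\leq \e^{-2nM}$ for $x\geq3$ and $y\leq1$, and no $p$-th moment is needed. With that replacement your argument closes, and your bookkeeping at the end (absorbing $M\sqrt{n}$ by enlarging $\delta$, then using $\delta^n\leq\delta^m$) is fine.

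It is worth noting that your route, once repaired, differs substantially from the paper's and is in fact more elementary and gives a stronger conclusion. The paper splits on the event $\{S_n-n\mu<nMx\}$ rather than on a threshold for $\log W_n$, observes that by H1) the Hoeffding bound $H_n(nx,\cdot)$ vanishes identically for $x\geq1$, and reduces everything to controlling $|\log W_n-\log W_m|$ via the estimate $\E|\log W_{k+1}-\log W_k|\leq c\delta^k$ from \cite{Gra} --- which is where H2) and the free parameter $m$ actually enter, and which produces the factor $\delta^m/(M\sqrt{n})$ through Markov's inequality at level $M\sqrt{n}$. Your argument needs neither that lemma nor H2), and yields a clean $C\delta^n$ bound that implies the stated $C\delta^m/(M\sqrt{n})$ for all $1\leq m\leq n$.
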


%We can replace $\sqrt{n}$ by $n$ in the denominator of the tail functions $\P\left(\frac{\log Z_n-n\mu}{\sqrt{n}M}\geq \cdot\right)$ above, on contrary to what is usually done by the others (see \cite{Chu} and \cite{Gra} for instance), subject to the scale level or the subjects of interest.\\

%From the perspective on the rate of convergence in the central limit theorem for $\log Z_n$, the Theorem 1.1 in \cite{Gra} about the Berry-Esseen's bound provides both lower and upper bounds for $|\,\P[\,(\log Z_n-n\mu)/\sigma\sqrt{n}\,]-\Phi(x)|$, where $\Phi$ is the standard normal distribution function. This theorem implies that there exists a constant $\delta_2\in(0,1)$, s.t. $\forall x\in[\frac{\sigma x_1}{M},+\In)$,
%\begin{equation}\label{LiuIne}
%\P\left[\frac{\log Z_n-n\mu}{\sqrt{n} M}\geq x\right]\leq \overline{F}\left(\frac{Mx}{\sigma}\right)+\frac{C}{n^{\delta_2/2}},
%\end{equation}
%where $\overline{F}(x):=1-\Phi(x)$ is the right tail function of the standard normal distribution. In comparison between (\ref{part}) and (\ref{LiuIne}), it can be seen that $\delta^{n^p}=o\left(n^{-\delta_2/2}\right)$, as $n\rightarrow+\In$, given the facts $\overline{F}(x)\leq1$ and $\displaystyle\lim_{n\rightarrow+\In}H_n(x,v)=\left(\frac{v^2}{x+v^2}\right)^{x+v^2}\exp(x)\leq\exp\left[-\frac{x^2}{2(v^2+\frac{1}{3}x)}\right]\leq1$, for any $x\geq0$ and $v>0$ (see also Remark 2.1 in \cite{Fan} for the last inequalities). Therefore, our upper bound in (\ref{part}) is sharper and improves the one implied by Berry-Esseen's in  \cite{Gra}.\\

The proof of Theorem \ref{Thm1} is inspired from that of Theorem 1.1 in \cite{Gra}.\\
\begin{proof}
Since from (\ref{eq1}) for any $n\geq1$,
$$\log Z_n-n\mu=(S_n-n\mu)+\log W_n,$$
we have
\begin{equation*}
\begin{split}
0\leq\P\left(\frac{\log Z_n-n\mu}{nM}\geq x\right)=\P\left(\frac{S_n-n\mu}{nM}\geq x\right)+&\P\left(\frac{\log Z_n-n\mu}{nM}\geq x,\frac{S_n-n\mu}{nM}<x\right)-\\
                                                        &\P\left(\frac{\log Z_n-n\mu}{nM}< x,\frac{S_n-n\mu}{nM}\geq x\right).
\end{split}
\end{equation*}
Therefore,
\begin{equation}\label{eq4}
\P\left(\frac{\log Z_n-n\mu}{nM}\geq x\right)\leq\P\left(\frac{S_n-n\mu}{nM}\geq x\right)+\P\left(\frac{\log Z_n-n\mu}{nM}\geq x,\frac{S_n-n\mu}{nM}<x\right).
\end{equation}
The upper bound for the first term in the right-hand side of the inequality above can be handled as below by Theorem \ref{Fan}, for $x\geq1$,
$$0\leq\P\left(\frac{S_n-n\mu}{nM}\geq x\right)=\P\left(\sum_{i=1}^n\frac{X_i-\mu}{M}\geq nx\right)\leq H_n\left(nx,\frac{\sqrt{n}\sigma}{M}\right)\equiv0,$$
since $nx\geq n$, for $x\geq1$.\\

To handle the second term, we need to study the joint distribution of $\left(\frac{\log Z_n-n\mu}{nM},\frac{S_n-n\mu}{nM}\right)$. The upper bound for the second term is obtained in the Lemma below. And the inequality (\ref{eqthm1}) is therefore proved.
\end{proof}
\begin{lemma}\label{Lem2}
Under assumptions H1) and H2), there exist constants $C>0$, $\delta\in(0,1)$, s.t. for $\displaystyle 1\leq m\leq n$ and $x\geq3$,
 $$\displaystyle \P\left(\frac{\log Z_n-n\mu}{nM}\geq x,\frac{S_n-n\mu}{nM}<x\right)\leq\frac{C\delta^m}{M\sqrt{n}}.$$
%\item[2)] $\displaystyle \P(\widetilde{Z}_n\geq x,\widetilde{X}_n< x)\leq2H_{n-m}\left(\sqrt{n-m}\,(x-x_1),v_{m,n}\right)+C\delta^m,$\quad $\forall x\in[x_1,+\In)$.
\end{lemma}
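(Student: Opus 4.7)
The plan is to exploit three ingredients in turn: the almost-sure upper bound on $S_n - n\mu$ provided by H1, a uniform $L^p$-bound on the martingale $(W_n)$ obtained from H2, and a final elementary algebraic reconciliation.

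First, I would exploit H1 to reduce the joint event. Since $X_i - \mu \leq M$ almost surely, one has $S_n - n\mu \leq nM$ for every $n$, so for any $x \geq 3 > 1$ the inequality $\frac{S_n-n\mu}{nM} < x$ is trivially satisfied on the whole sample space. The joint event in the lemma therefore coincides with $\{\log Z_n - n\mu \geq nMx\}$. Combining the decomposition $\log Z_n = S_n + \log W_n$ with the same a.s.\ bound $S_n \leq n\mu + nM$ yields, on this event,
$$\log W_n = \log Z_n - S_n \geq nMx - nM = nM(x-1) \geq 2nM,$$
so that $W_n \geq e^{2nM}$.

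Next, under H2, $\E(Z_1/m_0)^p < \infty$ for some $p>1$, and without loss of generality I may take $p \in (1,2]$ (Jensen downgrades the exponent). The martingale increments $W_{n+1} - W_n = \Pi_{n+1}^{-1}\sum_{i=1}^{Z_n}(N_{n,i} - m_n)$ are, conditionally on $\mathcal{F}_n$, centered sums of independent variables with finite $p$-th moment; a Burkholder-type inequality in the Athreya-Karlin / Tanny tradition then yields a uniform bound $\sup_n \E W_n^p =: K < \infty$. Markov's inequality at level $e^{2nM}$ gives
$$\P(W_n \geq e^{2nM}) \leq K e^{-2nMp}.$$
To convert this to the claimed form, I would pick any $\delta \in (e^{-2Mp}, 1)$, so that $\delta e^{2Mp} > 1$ and the map $n \mapsto M\sqrt{n}(\delta e^{2Mp})^{-n}$ is bounded on $[1,\infty)$ by some constant $C_0$; since $m \leq n$ and $\delta < 1$ imply $\delta^n \leq \delta^m$, one gets
$$K e^{-2nMp} \leq \frac{K C_0}{M\sqrt{n}}\,\delta^n \leq \frac{C}{M\sqrt{n}}\,\delta^m,$$
with $C = K C_0$, as claimed.

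The main hurdle I anticipate is the second step, the uniform $L^p$-bound on $W_n$; although classical in the BPRE literature, it requires a careful martingale moment inequality and attention to the admissible range of $p$ in H2. Everything else is elementary. Note that the argument above in fact yields a strictly exponential-in-$n$ estimate, which is stronger than the target $\delta^m/(M\sqrt{n})$; this suggests that the authors, following Grama et al., likely proceed via a more intrinsic decomposition at generation $m$, writing $Z_n = \sum_{i=1}^{Z_m} Z^{(i)}_{n-m}$ and exploiting conditional independence of the sub-trees. Such a route would make the role of the parameter $m$ genuine and let the $1/\sqrt{n}$ factor arise naturally from a local central limit effect for the i.i.d.\ random walk $S_n$, rather than being an artefact absorbed into constants.
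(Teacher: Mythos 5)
Your argument is correct, and it takes a genuinely different and more economical route than the paper's. The paper follows the Grama--Liu--Miqueu scheme: it splits the sum at generation $m$, conditions on $\bigl(\frac1n\sum_{i\le m}\frac{X_i-\mu}{M},\frac{\log W_m}{nM}\bigr)$, invokes the Fan--Grama--Liu bound $H_{n-m}$ to show the main term vanishes (for essentially the same reason your reduction works: a sum of $n-m$ increments each $\le 1$ a.s.\ cannot reach level $n(x-\cdots)\ge n$ when $x\ge3$), and then controls the cost of replacing $\log W_n$ by $\log W_m$ via Markov's inequality together with the Grama--Liu--Miqueu estimate $\E|\log W_{k+1}-\log W_k|\le c\delta^k$; that last step is the sole source of both the $\delta^m$ and the $1/(M\sqrt n)$ in the statement. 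You instead observe that under H1 the event $\{\frac{S_n-n\mu}{nM}<x\}$ has full probability once $x\ge3$, so the joint event forces $\log W_n=\log Z_n-S_n\ge nM(x-1)\ge 2nM$, and one Markov inequality finishes. Two remarks. First, the hurdle you flag (the uniform $L^p$ bound on $W_n$) is real but avoidable: since $\E W_n=\E[\E_{\overline{\xi}}W_n]=1$, first-order Markov already gives $\P(W_n\ge e^{2nM})\le e^{-2nM}$, and choosing $\delta=e^{-M}$ with $M\sqrt n\,e^{-nM}\le C_0$ yields the claim, so neither von Bahr--Esseen nor H2 is needed for this lemma; if you do insist on $\sup_n\E W_n^p<\infty$, you must additionally check $\E m_0^{1-p}<1$, which holds here only because $p_0=0$ a.s.\ gives $m_0\ge1$ and $\mu>0$ gives $m_0>1$ with positive probability --- a point your sketch glosses over. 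Second, your bound decays like $\delta^n$, strictly stronger than the asserted $\delta^m$; the $\delta^m/(M\sqrt n)$ shape in the paper is an artefact of its two-block decomposition and of the $\log W$ increment lemma, not of a sub-tree or local-CLT argument as you conjectured at the end.
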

%Before proving it, let's introduce some notations. Set
%$$Y_{m,n}:=\frac{1}{\sqrt{n}}\sum_{i=m+1}^n\left(\frac{X_i-\mu}{M}\right),\quad \text{for }0\leq m\leq n-1;$$
%$$Y_n:=Y_{0,n};$$
% and
% $$D_{m,n}:=V_n-V_m,\text{ for }1\leq m\leq n-1.$$
% Then we have
%  $$\widetilde{X}_n=Y_n$$
%  and
% $$\widetilde{Z}_n=Y_n+ V_n.$$
\begin{proof}
\begin{equation*}\label{eq5}
\begin{split}
&\P\left(\frac{\log Z_n-n\mu}{nM}\geq x,\frac{S_n-n\mu}{nM}<x\right)\\
=&\P\left[\frac{1}{n}\sum_{i=1}^n\left(\frac{X_i-\mu}{M}\right)+\frac{\log W_n}{nM}\geq x,\frac{1}{n}\sum_{i=1}^n\left(\frac{X_i-\mu}{M}\right)<x\right]\\
\leq& \P\left[\frac{1}{n}\sum_{i=1}^n\left(\frac{X_i-\mu}{M}\right)+\frac{\log W_n}{nM}\geq x-\frac{1}{\sqrt{n}}, \frac{1}{n}\sum_{i=1}^n\left(\frac{X_1-\mu}{M}\right)<x\right]+\\
&\qquad\qquad\qquad\qquad\qquad\qquad\qquad\qquad\P\left(\frac{|\log W_n-\log W_m|}{nM}>\frac{1}{\sqrt{n}}\right)\\
%\frac{1}{n}\left(\sum_{i=1}^m\frac{X_i-\mu}{M}+\sum_{i=m+1}^n\frac{X_i-\mu}{M}\right)<x\right]+\\
%   &\qquad\qquad\qquad\qquad\qquad\P\left(\frac{|\log W_n-\log W_m|}{nM}>\frac{1}{\sqrt{n}}\right)\\
:=&I_1+I_2.
\end{split}
\end{equation*}
Let us study $I_1$ and $I_2$ respectively.
\begin{equation*}
\begin{split}
I_1&=\P\left[\frac{1}{n}\left(\sum_{i=1}^m\frac{X_i-\mu}{M}+\sum_{i=m+1}^n\frac{X_i-\mu}{M}\right)+\frac{\log W_n}{nM}\geq x-\frac{1}{\sqrt{n}},\frac{1}{n}\left(\sum_{i=1}^m\frac{X_i-\mu}{M}+\sum_{i=m+1}^n\frac{X_i-\mu}{M}\right)<x\right]\\
   &=\int\P\left[\frac{1}{n}\sum_{i=m+1}^n\left(\frac{X_i-\mu}{M}\right)+s+t\geq x-\frac{1}{\sqrt{n}},s+\frac{1}{n}\sum_{i=m+1}^n\left(\frac{X_i-\mu}{M}\right)<x \right]\,\nu_m(\mbox{d} s,\mbox{d} t),
\end{split}
\end{equation*}
where $\nu_m(\mbox{d} s,\mbox{d} t)$ is the joint distribution of $\displaystyle \left(\frac{1}{n}\sum_{i=1}^m\frac{X_i-\mu}{M},\frac{\log W_m}{nM}\right)$. By conditioning on $\displaystyle \left(\frac{1}{n}\sum_{i=1}^m\frac{X_i-\mu}{M},\frac{\log W_m}{nM}\right)$ and the independence between $\displaystyle \left(\frac{1}{n}\sum_{i=m+1}^n\frac{X_i-\mu}{M}\right)$ and $\displaystyle \left(\frac{1}{n}\sum_{i=1}^m\frac{X_i-\mu}{M},\frac{\log W_m}{nM}\right)$, for $0\leq m\leq n-1$, the equality above becomes
\begin{equation}\label{eq6}
I_1=\int\mathds{1}_{\left\{t\leq\frac{1}{n}\right\}}\left[\P\left(\frac{1}{n}\sum_{i=m+1}^n\frac{X_i-\mu}{M}\geq x-s-t-\frac{1}{\sqrt{n}}\right)-\P\left(\frac{1}{n}\sum_{i=m+1}^n\frac{X_i-\mu}{M}\geq x-s\right)\right]\,\nu_m(\mbox{d} s,\mbox{d} t).
\end{equation}
By the identical distribution of the random environment and the assumption H1),
$$\frac{1}{n}\sum_{i=m+1}^n\left(\frac{X_i-\mu}{M}\right)\overset{\mathcal{D}}{=}\frac{1}{n}\sum_{i=1}^{n-m}\left(\frac{X_i-\mu}{M}\right)\leq \frac{n-m}{n}\quad \P -a.s.$$
Therefore, we can replace $\displaystyle \frac{1}{n}\sum_{i=m+1}^n\left(\frac{X_i-\mu}{M}\right)$ in (\ref{eq6}) by $\displaystyle \frac{1}{n}\sum_{i=1}^{n-m}\left(\frac{X_i-\mu}{M}\right)$ to obtain
\begin{equation}I_1=\int\mathds{1}_{\left\{t\leq\frac{1}{n},\;s\leq \frac{n-m}{n}\right\}}\left[\P\left(\frac{1}{n}\sum_{i=1}^{n-m}\frac{X_i-\mu}{M}\geq x-s-t-\frac{1}{\sqrt{n}}\right)-\P\left(\frac{1}{n}\sum_{i=1}^{n-m}\frac{X_i-\mu}{M}\geq x-s\right)\right]\nu_m(\mbox{d} s,\mbox{d} t).
\end{equation}
Applying Theorem \ref{Fan} to $\displaystyle\left(\frac{1}{n}\sum_{i=1}^{n-m}\frac{X_i-\mu}{M}\right)$, we have for $\displaystyle x\geq3$,

\begin{equation}\label{eqI1_1}
\begin{split}
\P\left(\frac{1}{n}\sum_{i=1}^{n-m}\frac{X_i-\mu}{M}\geq x-s-t-\frac{1}{\sqrt{n}}\right)=& \P\left[\sum_{i=1}^{n-m}\frac{X_i-\mu}{M}\geq n(x-s-t-\frac{1}{\sqrt{n}})\right]\\
\leq& H_{n-m}\left(n\left(x-s-t-\frac{1}{\sqrt{n}}\right), \frac{\sigma\sqrt{n-m}}{M}\right)\\
 \equiv&\;0,
\end{split}
\end{equation}
since if $x\geq3$, $\displaystyle n\left(x-s-t-\frac{1}{\sqrt{n}}\right)\geq n$. And similarly for $x\geq1$,
\begin{equation}\label{eqI1_2}
\begin{split}
\P\left(\frac{1}{n}\sum_{i=1}^{n-m}\frac{X_i-\mu}{M}\geq x-s\right)&= \P\left[\sum_{i=1}^{n-m}\frac{X_i-\mu}{M}\geq n(x-s)\right]\\
                                                                   &\leq H_{n-m}\left(n(x-s),\frac{\sigma\sqrt{n-m}}{M}\right)\\
                                                                   &\equiv0,
\end{split}
\end{equation}
since if $x\geq1$, $n(x-s)\geq n$ . Combining (\ref{eq6}), (\ref{eqI1_1}) and (\ref{eqI1_2}), we hence obtain that for $x\geq 3$,
$$I_1\equiv0.$$

The next step is to find an upper bound for $I_2$. By Markov's inequality and the Lemma 2.4 in \cite{Gra} as below, we obtain that there exist constants $\delta\in(0,1)$ and $c>0$, s.t. for $0\leq m\leq n-1$,
\begin{equation*}
\begin{split}
I_2=&\P\left(|\log W_n-\log W_m|>M\sqrt{n}\right)\\
\leq&\frac{\E\left|\log W_n-\log W_m \right|}{M\sqrt{n}}\\
\leq&\frac{1}{M\sqrt{n}}\sum_{k=m}^{n-1}\E\left|\log W_{k+1}-\log W_k\right|\\
\leq&\frac{c}{M\sqrt{n}}\sum_{k=m}^{n-1}\delta^k=\frac{c\delta^m(1-\delta^{n-m})}{M\sqrt{n}(1-\delta)}\leq\frac{c'\delta^m}{M\sqrt{n}},
\end{split}
\end{equation*}
where $\displaystyle c'=\frac{c}{1-\delta}$.
\end{proof}
\begin{lemma}[\cite{Gra}]
Under assumption H2), there exist constants $c>0$ and $\delta\in(0,1)$, s.t. for any $n\geq0$,
$$\E|\log W_{n+1}-\log W_n|\leq c \delta^n.$$
\end{lemma}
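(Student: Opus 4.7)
The plan is to control $\log W_{n+1}-\log W_n=\log\eta_n$, where
$$\eta_n:=\frac{W_{n+1}}{W_n}=\frac{1}{m_n Z_n}\sum_{i=1}^{Z_n}N_{n,i},$$
by exploiting two structural facts: conditionally on $\mathcal{F}_n$ and the environment $\xi_n$, the $(N_{n,i})_i$ are i.i.d.\ with mean $m_n$, so $\eta_n-1$ is the empirical mean of $Z_n$ conditionally i.i.d.\ centred variables; and $p_0=0$ a.s.\ forces $Z_{n+1}\geq Z_n$, whence $\eta_n\geq 1/m_n$.

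First I will bound $\E|\eta_n-1|^p$. Taking $p\in(1,2]$ (Lyapunov-reduce the exponent from H2) if necessary), a Marcinkiewicz--Zygmund / von Bahr--Esseen inequality applied conditionally on $(\mathcal{F}_n,\xi_n)$, together with the independence of $\xi_n$ from $Z_n$, yields
$$\E|\eta_n-1|^p\leq C\,\E\bigl[Z_n^{-(p-1)}\bigr]\cdot\E\Bigl|\frac{N_{0,1}-m_0}{m_0}\Bigr|^p,$$
the last factor being finite because $\E(Z_1/m_0)^p<\infty$ from H2).

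Next, split $|\log\eta_n|=|\log\eta_n|\mathds{1}_{\eta_n\geq 1/2}+|\log\eta_n|\mathds{1}_{\eta_n<1/2}$. On $\{\eta_n\geq 1/2\}$ the elementary inequality $|\log x|\leq (2\log 2)|x-1|$ together with Lyapunov give $\E[|\log\eta_n|\mathds{1}_{\eta_n\geq 1/2}]\leq C(\E|\eta_n-1|^p)^{1/p}$. On $\{\eta_n<1/2\}$, from $\eta_n\geq 1/m_n$ one has $|\log\eta_n|\leq X_{n+1}$; H\"older with exponent $q>2$ of H2) and the Markov bound $\P(\eta_n<1/2)\leq 2^p\E|\eta_n-1|^p$ then give $\E[|\log\eta_n|\mathds{1}_{\eta_n<1/2}]\leq (\E X_1^q)^{1/q}(2^p\E|\eta_n-1|^p)^{1-1/q}$. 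Both contributions are thus controlled by a positive power of $\E[Z_n^{-(p-1)}]$.

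It remains to show $\E[Z_n^{-(p-1)}]\leq c\,\delta_0^{\,n}$ for some $\delta_0\in(0,1)$. Writing $Z_n=\Pi_n W_n$ and applying H\"older with conjugate exponents $a,b$,
$$\E[Z_n^{-(p-1)}]\leq \bigl(\E\Pi_n^{-a(p-1)}\bigr)^{1/a}\bigl(\E W_n^{-b(p-1)}\bigr)^{1/b}.$$
Since $p_0=0$ a.s.\ gives $X_1=\log m_0\geq 0$, and $\mu>0$ with $\sigma^2>0$ excludes $X_1\equiv 0$, the Laplace transform $\varphi(t):=\E e^{-tX_1}$ satisfies $\varphi(t)<1$ for every $t>0$, so $\E\Pi_n^{-a(p-1)}=\varphi(a(p-1))^{n}$ already supplies the geometric factor. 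The second factor should be bounded uniformly in $n$ provided $b(p-1)$ is chosen sufficiently small — this is the classical statement that the martingale $W_n$ admits uniformly bounded harmonic moments of small enough order under H2). The main obstacle is precisely this harmonic-moment bound, which requires a self-contained martingale argument (typically a recursive inequality for a Laplace-type transform of $W_n$, as in \cite{Gra}), and is where H2) and $p_0=0$ are fully exploited. Granted it, assembling the pieces gives $\E|\log W_{n+1}-\log W_n|\leq c\,\delta^{\,n}$ with $\delta\in(0,1)$ obtained from $\delta_0$ through the smallest of the exponents $1/p$, $1-1/q$, and $1/a$.
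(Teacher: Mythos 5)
First, a point of reference: the paper does not prove this lemma at all --- it is imported verbatim from \cite{Gra}, so the comparison here is against that source rather than against any argument in the present paper. Your reduction is essentially the route taken there, and the steps you actually carry out are correct: writing $\log W_{n+1}-\log W_n=\log\eta_n$ with $\eta_n=\frac{1}{m_nZ_n}\sum_{i=1}^{Z_n}N_{n,i}$; the conditional von Bahr--Esseen bound $\E|\eta_n-1|^p\leq C\,\E\bigl[Z_n^{-(p-1)}\bigr]\,\E\bigl|N_{0,1}/m_0-1\bigr|^p$ (the last factor is indeed finite by H2), since $Z_1=N_{0,1}$); the elementary inequality $|\log x|\leq(2\log 2)|x-1|$ on $[1/2,+\In)$; and, on $\{\eta_n<1/2\}$, the bound $|\log\eta_n|\leq\log m_n=X_{n+1}$ coming from $\eta_n\geq 1/m_n$ (valid because $p_0=0$ forces $Z_{n+1}\geq Z_n$), combined with H\"older against $\E|\log m_0|^q<+\In$. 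The assembly of the exponents at the end is also fine.

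The one genuine gap is the one you flag yourself: the geometric decay of $\E\bigl[Z_n^{-(p-1)}\bigr]$. Your factorization $Z_n=\Pi_nW_n$ correctly extracts a geometric factor $\varphi(a(p-1))^n$ from the random walk, but it pushes the entire difficulty into the uniform boundedness of $\E\bigl[W_n^{-s}\bigr]$ for some small $s>0$. That is a nontrivial theorem in its own right (existence of negative/harmonic moments of the limit $W$), not something that follows from H2) by soft arguments, so as written the proof is conditional on an unproven ingredient and is not self-contained. It is worth noting that \cite{Gra} does not route through $W_n$ at this point: they establish $\E\bigl[Z_n^{-a}\bigr]\leq C\delta^n$ directly by a one-step recursion on the conditional generating function, using $p_0=0$ and $\P(Z_1=1)=\E\,p_1(\xi_0)<1$ to get $f_{\xi}(s)\leq s\bigl(p_1(\xi)+(1-p_1(\xi))s\bigr)$ together with the integral representation $k^{-a}=\frac{1}{\Gamma(a)}\int_0^{+\In}t^{a-1}\e^{-kt}\,\d t$. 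Replacing your H\"older split of $Z_n=\Pi_nW_n$ by that recursion would close the gap and turn your sketch into a complete proof.
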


\section{Example: binary branching}\label{secE}\label{Sec3}
A typical example is the binary BP in i.i.d random environment $\overline{\xi}$ with common distribution supported in a finite set. Consider $\nu=\{\nu_k\}_{1\leq k\leq k_0}$ be a probability distribution with support in the finite set $\{a_1,\cdots, a_{k_0}\}\in(0,1)^{k_0}$, where $\nu_k$ is the mass on $a_k$ for each $k$. %Denote by $\nu[X]$ the expectation of a r.v. $X$ under the distribution $\nu$, i.e. for any $\nu$-measurable function $f$ and r.v. $X$ following the distribution $\nu$,

%$$\nu[f(X)]=\sum_{k=1}^{k_0}f(a_k)\nu_k.$$
Suppose the random environment of the ($n+1$)th generation $\xi_n$ depends solely on one single random parameter $P_n\in(0,1)$, the success rate of a Bernoulli trial, so that we can denote the entire random environment process by $\overline{\xi}=(P_0,P_1,\cdots)$. Suppose $\{P_0,P_1,\cdots\}$ is a sequence of i.i.d r.v.s following the common probability distribution $\nu$. Given the environment, the offspring distribution in each family is Bernoulli. More precisely, given $\overline{\xi}$, the conditional p.m.f. of the family size $N_{k,i}$ is given by
$$\P_{\overline{\xi}}(N_{k,i}=1)=P_{k},\text{ and }\P(N_{k,i}=2)=1-P_{k},$$
for any $k\geq0$ and $i\geq1$, that is, every family size is either 1 or 2. \\

Since
$$m_0=\E(N_{0,1}|P_0)=1\times P_0+2\times(1-P_0)=2-P_0>1,\quad \nu-a.s.$$
and $p_0(\xi_0)=0$, we have %and the probability distribution $\nu$ is with support in finite set. It is evident that all the summations in the expectation w.r.t $\nu$ below are finite summations and more precisely,
$$0<\mu=\nu[\log(2-P_0)]<+\In,$$
$$\E|\log(1-p_0(\xi_0))|=0,$$
$$0<\sigma^2=\nu[\log(2-P_0)-\mu]^2<+\In,$$
$$\E\left(\frac{Z_1\log^+Z_1}{m_0}\right)=\nu\left[\frac{\E_{\xi_0}(Z_1\log^+Z_1)}{m_0}\right]=2(\log2)\,\nu\left[\frac{1-P_0}{2-P_0}\right]<2(\log 2)\nu(1-P_0)<2\log2,$$
$$\text{ for any }i\geq1,\quad X_i=\log(2-P_{i-1})<\log2,\quad\nu-a.s.,$$
$$\E\left(\frac{Z_1}{m_0}\right)^p=\nu\left[\frac{\E_{\xi_0}(Z_1^p)}{m_0^p}\right]=\nu\left[\frac{2^p+(1-2^p)P_0}{(2-P_0)^p}\right]<2^p+(1-2^p)\nu(P_0)<1,\quad \text{for }p>1$$
and
$$\E|\log m_0|^q=\nu[\log(2-P_0)]^q<(\log 2)^q, \quad \text{for }q>2.$$
So the assumptions 1) - 3), H1) and H2) are all fulfilled. From the Theorem \ref{Thm1}, we have there exist constants $C>0$ and $\delta\in(0,1)$ s.t. for any $x\geq3$ and $1\leq m\leq n$,
$$\P\left(\frac{\log Z_n-n\mu}{nM}\geq x\right)\leq \frac{C\delta^m}{M\sqrt{n}},$$
where $\displaystyle \mu=\nu[\log(2-P_0)]$ and $M=\log2-\nu[\log(2-P_0)]$.

\section*{Acknowledgements}
The author is grateful for Dr. Xiequan Fan from Center for Applied Mathematics, Tianjin University, for a couple of discussions on the classical Hoeffding inequality and especially the recent development of this kind of inequalities.

\end{document}